\numberwithin{equation}{section}
\allowdisplaybreaks \setlength{\textwidth}{6.5in}
\numberwithin{equation}{section}
\newtheorem{prop}{Proposition}
\newtheorem{theorem}[prop]{Theorem}
\theoremstyle{definition}
\newcommand{\vep}{\varepsilon}
        \newcommand{\vrightarrow}{\,{\buildrel v \over \rightarrow}\,}
\newcommand{\alphain}{\alpha_{\text in}}
\newcommand{\alphaout}{\alpha_{\text out}}
\newcommand{\deltain}{\delta_{\text in}}
\newcommand{\deltaout}{\delta_{\text out}}
\def\bx{\boldsymbol x}
\def\by{\boldsymbol y}
\def\bu{\boldsymbol u}
\def\bv{\boldsymbol v}
\def\bu{\boldsymbol u}
\def\bone{\boldsymbol 1}
\def\bs{\boldsymbol s}
\def\E{\mathbb{E}}
\def\bzero{\boldsymbol 0}
\def\bx{\boldsymbol x}
\def\by{\boldsymbol y}
\def\bu{\boldsymbol u}
\def\bv{\boldsymbol v}
\def\binfty{\boldsymbol \infty}
\def\bgamma{\boldsymbol \gamma}
\def\bb{\boldsymbol b}
\def\blambda{\boldsymbol \lambda}
\def\Rplus{\mathbb{R}_+}
\begin{document}

\title[Tauberian Theory and Preferential Attachment]{Tauberian Theory for
  Multivariate Regularly Varying Distributions with Application to
  Preferential Attachment Networks}

\author[S. Resnick]{Sidney Resnick}
\address{School of Operations Research and Information Engineering\\
and Department of Statistical Science \\
Cornell University \\
Ithaca, NY 14853}
\email{sir1@cornell.edu}
\author[G. Samorodnitsky]{Gennady Samorodnitsky}
\address{School of Operations Research and Information Engineering\\
and Department of Statistical Science \\
Cornell University \\
Ithaca, NY 14853}
\email{gs18@cornell.edu}

\thanks{This research was supported by the ARO
MURI grant  W911NF-12-10385 to Cornell University.}

\subjclass{Primary 60G70, 05C80} 
\keywords{multivariate heavy tails, preferential attachment model,
scale free networks, Tauberian theory}

\setcounter{lemma}{0}


\begin{abstract}
Abel-Tauberian theorems relate power
law behavior of distributions and their transforms. We formulate and
prove a multivariate version for non-standard regularly varying
measures on $\mathbb{R}_+^p$ and then apply it to 
prove that the joint distribution of  in- and out-degree in a directed edge 
preferential attachement model has  jointly regularly varying
tails. 
\end{abstract} 

\maketitle
\section{Introduction}\label{sec: Intro}
This paper has two themes: (i) Abel-Tauberian theorems relate power
law behavior of distributions and their transforms. (ii) Such
Abel-Tauberian theorems can be used to study power law behavior
of in- and out-degree of directed edge preferential attachment 
network models.

Abel-Tauberian theorems  relate
 regular variation of infinite Radon measures $U(\bx) =U([\bzero,\bx])$ on
$\mathbb{R}_+^p$ 
to  regular variation of their Laplace transforms $$\hat
U(\bone/\bx)=\int_{\mathbb{R}_+^p}
e^{-\sum_{i=1}^p s_i/x_i} U(d\bs), \qquad \bx>\bzero.$$ 
In one dimension when $p=1$, such theorems provide standard tools for 
obtaining asymptotic power law tails for
cases when a probability description is more easily specified by 
the transform rather than the distribution.  Often the transform
rather than the distribution
is accessible  as a solution to difference or
recursive relations. Application areas include queueing theory,
branching processes, insurance modeling and network
analysis. Standard references covering the essentials in one dimension
are \cite{bingham:goldie:teugels:1987, feller:1971}. 
Transform theory when $p>1$  
for the standard case of regular variation are considered
in \cite{stam:1977, resnick:2007, resnick:1991,
  stadtmuller:trautner:1979,
  stadtmuller:trautner:1981, stadtmuller:1981, yakimiv:2005}.  In this
paper we consider an Abel-Tauberian theorem for the non-standard case
of regular variation where scaling functions for different components
have different tail indices.

Preferential attachment is an important mechanism for describing
growth of directed networks where a new node attaches to an existing
node or new edges are created according to probabilistic  postulates that take into
account the in- and out-degree of the existing nodes. We consider
models studied by \cite{bollobas:borgs:chayes:riordan:2003} and
\cite{krapivsky:redner:2001}. Based on  solutions to difference
equations, \cite{resnick:samorodnitsky:towsley:davis:willis:wan:2014} derived the joint generating function of limiting
frequencies for in-degree and out-degree. In this paper we explain how
the joint non-standard regular variation of in- and out-degree can be
obtained from the joint generating function  using
Abel-Tauberian theory.

This paper is organized as follows. We start with a brief summary of
multivariate regular variation  of measures in Section
\ref{sec:mult.reg.var} to establish notation and basic concepts.
Section \ref{sec:Tauberian.thm} gives the Abel-Tauberian theorem for
measures which are non-standard regularly varying. In Section 
\ref{sec:InOutRegVar}, we apply the Tauberian theory to study the
multivariate power laws of in- and out-degree in the preferential
attachment model. Section \ref{subsec:moddes} includes a detailed
description of the preferential attachment model, Section
\ref{subsec:results} summarizes known results about the joint
generating function of in- and out-degree and Section
\ref{sec:InOutRegVar} applies the Tauberian theory to obtain the joint
power law behavior of in- and out-degree.

\section{Multivariate regular variation} \label{sec:mult.reg.var} 
We briefly review the basic concepts of multivariate regular
variation which forms the mathematical framework for multivariate
heavy tails. We emphasize two dimensions since this is the
context for the application to in- and out-degree but generally the
extension to $p\geq 2$ dimensions is clear.
 See \cite{resnick:2007} for more detail.

A random vector $(X,Y)\geq \bzero$ has a distribution that is 
non-standard regularly varying  if there exist {\it
  scaling functions\/} $a(t)\uparrow \infty$ and  $b(t)\uparrow
\infty$ and a non-zero limit measure $\nu(\cdot)$ called the {\it limit or tail
measure\/} such that as $t\to\infty$,
\begin{equation}\label{e:defreg}
tP\bigl[\bigl(X/a(t),Y/b(t)\bigr) \in \cdot \,\bigr] \vrightarrow  \nu(\cdot)
\end{equation}
where ``$\vrightarrow $'' denotes vague convergence of measures in
$M_+([0,\infty]^2\setminus \{\bzero\})=M_+(\E)$, the space of Radon
measures on $\E$. The scaling functions will be regularly varying and
we assume their indices are positive and therefore, without loss of
generality, we may suppose $a(t)$ and $b(t)$ are continuous and
strictly increasing. The phrasing in \eqref{e:defreg} implies the
marginal distributions have regularly varying tails.

In case $a(t)=b(t)$, $(X,Y)$ has a distribution with {\it standard\/}
  regularly varying tails. Given a vector with a distribution which is
  non-standard regularly varying, there are at least two methods for
  standardizing the vector so that the transformed vector has standard
  regular variation \citep[Section 9.2.3]{resnick:2007}. The simplest is the power method which is
  justified when the scaling functions are power functions:
$$a(t)=t^{1/\gamma_1},\quad b(t)=t^{1/\gamma_2},\quad \gamma_i>0, \,i=1,2.$$
For instance with $c=\gamma_1/\gamma_2$,
\begin{equation}\label{e:standard}
tP\bigl[ \bigl( X^c/t^{1/\gamma_2}, Y/t^{1/\gamma_2}\bigr) \in \cdot \,]
  \vrightarrow  \tilde \nu(\cdot),
\end{equation}
where if $T(x,y)=(x^c,y)$, then $\tilde \nu=\nu\circ T^{-1}.$ Since
the two scaling functions in \eqref{e:standard} are the same, the regular
variation is now standard. The measure $\tilde \nu$ will  have a
scaling property and if the coordinate system is changed properly,
$\tilde \nu$ will disintegrate to a product; for example the polar
coordinate transform  is one such coordinate system change achieving
the disintegration into a product and this provides access to an
angular measure that is one way to describe the
asymptotic dependence structure of the standardized $(X,Y)$.

The non-standard regular variation of Radon measures is defined in
\eqref{eq:b} below. 

\subsection{Miscellaneous notation.}
\label{subsec:notation}

Here is a  notation and concept summary.
$$ 
\begin{array}{llll}
RV_\beta & \text{Regularly varying functions with index $\beta>0$. We
pick versions}\\
&\text{of such functions that are continuous and strictly
  increasing.}\\[2mm]
M_+(\E) & \text{Radon measures on  $\E:=[0,\infty]^p \setminus \{\bzero\}$ metrized by
  vague convergence.}\\[2mm]
M_+(\Rplus^p) & \text{Radon measures on $\Rplus^p$ metrized by
  vague convergence.}\\[2mm]
\stackrel{v}{\to} & \text{Vague convergence in $M_+(\Rplus^p)$ or
  $M_+(\E)$ as appropriate.}\\[2mm]
\bx & \bx=(x_1,\dots,x_p).\\[2mm]
\blambda \bx & (\lambda_1x_1,\dots,\lambda_p x_p).\\[2mm]
\blambda'\bx & \sum_{i=1}^p \lambda_i x_i.\\[2mm]
\bone & \bone=(1,\dots,1).\\[2mm]
\bzero & \bzero=(0,\dots,0).\\[2mm]
\hat U & \text{Laplace transform of a measure $U$; $\hat
  U(\blambda)=\int_{\mathbb{R}_+^p} \exp\{-\blambda'\bx\}U(d\bx)$.}\\[2mm]
\aleph & \aleph =\{\bx \in \Rplus^p: \|\bx \|=1\}, \text{ the unit
  sphere in $\mathbb{R}_+^p$ for some
  norm $\|\cdot\|$.}\\
\end{array}
$$

In general vectors are denoted by bold letters,
eg. $\bx=(x_1,\dots,x_p)$.  Operations on vectors, unless noted
otherwise, should be interpreted componentwise. Thus, $\blambda \bx =
(\lambda_1x_1,\dots,\dots,\lambda_p x_p)$ but
(as noted)
$\blambda'\bx = \sum_{i=1}^p \lambda_i x_i.$ Also
$[\bzero,\bx]=\{(u_1,\dots,u_p): 0\leq u_i \leq x_i, \,i=1,\dots,p\}$.

\section{A Tauberian theorem for nonstandard regular variation}
\label{sec:Tauberian.thm} 

In this section we give an Abel-Tauberian theorem which relates
non-standard regular variation of a Radon measure $U(\bx)$ on $\mathbb{R}_+^p$
to non-standard regular variation of the Laplace transform $\hat
U(\bone/\bx)$. Versions in the standard case when $p>1$ are considered
in \cite{stam:1977, resnick:2007, resnick:1991,
  stadtmuller:trautner:1979,
  stadtmuller:trautner:1981, stadtmuller:1981, yakimiv:2005}.

\subsection{Assumptions}\label{subsubsec:basic}For $p\geq 1$, suppose $U$ is a measure on $\mathbb{R}_+^p$ satisfying 
\begin{equation}\label{eq:aa}
\hat U(\blambda):= \int_{\mathbb{R}_+^p} e^{-\blambda '  \bx} U(d\bx)<\infty, \quad
\blambda >\bzero.
\end{equation}
This implies $U$
is Radon on $\mathbb{R}^p_+$ since for $\blambda >\bzero$, and $\by>\bzero$,
\begin{align*}
\infty > & \int_{\mathbb{R}_+^p} e^{-\blambda '  \bx} U(d\bx)
\geq  \int_{\mathbb{R}_+^p} e^{-\blambda '  \bx}
1_{[\bzero,\by]}(\bx) U(d\bx)\\
\geq & e^{-\blambda '\by } \int 1_{[\bzero,\by]}(\bx)
U(d\bx)=e^{-\blambda '\by } U([\bzero,\by]).
\end{align*}
So $U(\by):=U([\bzero,\by]) <\infty$ for $\by>\bzero$ and therefore
 $U \in M_+(\mathbb{R}_+^p)$.

For $i=1,\dots,p$, assume 
\begin{equation}\label{eq:bRegVar}
b_i(t) \in RV_{1/\gamma_i},\, \gamma_i >0, \quad i=1,\dots,p.
\end{equation}
 Write
$\bb(t) =(b_1(t),\dots,b_p(t))$  and $\bgamma
=(\gamma_1,\ldots,\gamma_p)$. Set 
\begin{equation}\label{eq:defUt}
U_t(\bx)=\frac 1t U(\bb (t) \bx).
\end{equation}

\subsection{Regular variation of the measure implies
  regular variation of the
  transform.}\label{subsub:measureImpliesTransform}
For this section assume $U$ satisfies
\eqref{eq:aa} and  $U_t$ is defined in \eqref{eq:defUt}. The scaling
functions $b_i(t)$ satisfy
\eqref{eq:bRegVar}. The non-standard regular variation assumption for
$U$ is that there 
exist $U_\infty \in M_+(\mathbb{R}^p_+) $, $U_\infty \not \equiv 0$, such that 
\begin{equation}\label{eq:b}
U_t \stackrel{v}{\to} U_\infty,\qquad \text{ in } M_+(\mathbb{R}^p_+) .
\end{equation}
If we can choose the scaling functions $(b_i, \, i=1,\ldots,p)$ to be
identical, then the regular variation is standard. 

\subsubsection{Consequences}\label{subsubsec:conseq}
The assumptions have
consequences needed for further work.

\subsubsection*{1. Continuous convergence:}  The convergence in \eqref{eq:b} is {\it
    continuous convergence\/} on $(\bzero,\binfty):=(0,\infty)^p$;
  that is, if as $t\to\infty$, $\bx(t) \to \bx(\infty ) \in (\bzero,\binfty)$, then 
\begin{equation}\label{eq:contconv}
U_t(\bx (t)) \to U_\infty (\bx (\binfty)),\quad (t\to\infty),
\end{equation}
provided $\bx (\infty) $ is a continuity point of $U_\infty(\bx) $.
This is a monotonicity argument: If $\bx(\infty)$ and $\bx (\infty)
+\epsilon \bone $ are continuity points of $U_\infty(\bx)$, then
\begin{align*}
\limsup_{t\to\infty} U_t(\bx(t)) \leq &\lim_{t\to\infty} U_t (\bx
(\infty) +\epsilon \bone)\\
=& U_\infty( \bx(\infty) +\epsilon \bone),\\
\intertext{and  letting $\epsilon \downarrow 0$ in such a way that $\bx
  (\infty)+\epsilon \bone $ are continuity points of $U_\infty (\bx)$
  yields}
\limsup_{t\to\infty} U_t(\bx(t)) \leq & U_\infty (\bx (\infty)).
\end{align*}
A reverse inequality is obtained similarly. A consequence of the
continuous convergence is the scaling property: for $c>0$  
\begin{equation} \label{e:scling.genmap}
U_\infty\circ T_c^{-1} = cU_\infty,
\end{equation}
where $T_c:\, \mathbb{R}_+^p\to \mathbb{R}_+^p$ is defined by
$T_c\, \bx=c^{-1/\bgamma}\bx$. It is enough to check that for $\bx>0$ 
\begin{equation} \label{e:scling.gen}
U_\infty \bigl( c^{\bone/\bgamma}\bx\bigr)  =
cU_\infty(\bx)\,.
\end{equation}
Indeed, 
\begin{align*}
U_\infty \bigl( c^{\bone/\bgamma}\bx\bigr) = 
&\lim_{t\to\infty} \frac 1t U\bigl(b_1(t)
c^{1/\gamma_1}x_1,\dots, b_p(t)
c^{1/\gamma_p}x_p \bigr)\\
\intertext{and by continuous convergence, this is}
=&\lim_{t\to\infty} c  \frac {1}{ct}
U\Bigl(b_1(ct)\Bigl(\frac{b_1(t)}{b_1(ct)}c^{1/\gamma_1}\Bigr)x_1,\dots, 
b_p(ct)\Bigl(\frac{b_p(t)}{b_p(ct)}c^{1/\gamma_p}\Bigr)x_p 
\Bigr)\\
=& cU_\infty (\bx).
\end{align*}
The scaling property implies, in particular, that all points $\bx$ are
continuity points of $U_\infty$.

\subsubsection*{2. Laplace transform of $U_\infty$ exists:}  Let
$i_\ast\in \{ 1,\ldots, p\}$ be such that $\gamma_{i_\ast}\geq
\gamma_i$ for all $i\in \{ 1,\ldots, p\}$. It follows from the scaling
property \eqref{e:scling.genmap} that for any $a>0$
$$
U_\infty\left( \left\{ \bx:\, \sum_{i=1}^p x_i\leq a\right\}\right)
\leq a^{\gamma_{i_\ast}} U_\infty\left( \left\{ \bx:\, \sum_{i=1}^p
    x_i\leq 1\right\}\right)\,.
$$
Therefore, for
$\blambda>\bzero$, 
\begin{equation} \label{eq:finite}
\hat U_\infty (\blambda) \leq \int_{\mathbb{R}_+^p}
e^{-\min_i \lambda_i \sum_ix_i}\, U_\infty (d\bx)
\end{equation}
$$
\leq U_\infty\left( \left\{ \bx:\, \sum_{i=1}^p
    x_i\leq 1\right\}\right) \int_0^\infty e^{-(\min_i \lambda_i) x}\, 
\gamma_{i_\ast} x^{\gamma_{i_\ast}-1}\, dx<\infty\,.
$$

\subsubsection{The result.}\label{subsubsec:measImpliesTransform}
This section requires a regularity condition: for any $\bx>\bzero$,
\begin{equation}
\lim_{y\to\infty} \limsup_{t\to\infty}\int_{\cup_{i=1}^p[v_i>y]}  e^{-\sum_{i=1}^p v_i/x_i} U_t(d\bv)
=0.\label{eq:7early}
\end{equation}

\begin{prop} Assume \eqref{eq:bRegVar} and  suppose that $U$ satisfies
  the non-standard 
  regular variation condition\eqref{eq:b}. Then the Laplace transforms
$\hat U(\bone/\bx)$ and $\hat U_\infty (\bone/\bx)$ are distribution
functions of Radon measures on $\mathbb{R}_+^p $ and these measures
inherit the non-standard regular variation: for $\bx>0$
\begin{equation} \label{e:conv.tr}
\frac 1t \hat U\bigl(\bone/(\bb(t)\bx )\bigr) \to \hat U_\infty
(\bone/\bx),
\end{equation}
provided \eqref{eq:7early} also holds.
\end{prop}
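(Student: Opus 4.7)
The plan is to perform a change of variables so that $\tfrac{1}{t}\hat U(\bone/(\bb(t)\bx))$ becomes an integral of a fixed continuous function against $U_t$, and then pass to the limit using vague convergence on compacts plus the uniform integrability condition \eqref{eq:7early} to handle the tails.

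First I would verify the statement that $\hat U(\bone/\bx)$ and $\hat U_\infty(\bone/\bx)$ are genuinely distribution functions of Radon measures on $\Rplus^p$. Setting $F(\bx) = \int e^{-\sum_i u_i/x_i}\,U(d\bu)$, the $p$-dimensional increment of $F$ over a box $(\ba,\bb]$ factors as
\begin{equation*}
\Delta_{\ba}^{\bb} F = \int \prod_{i=1}^p \bigl(e^{-u_i/b_i}-e^{-u_i/a_i}\bigr)\,U(d\bu),
\end{equation*}
and each factor is nonnegative when $a_i<b_i$, so $F$ has nonnegative increments; finiteness of these increments on compact boxes follows from \eqref{eq:aa}. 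The same argument applies to $\hat U_\infty(\bone/\bx)$, using the finiteness already established in \eqref{eq:finite}.

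Next, a direct change of variables $\bu=\bb(t)\bv$ gives
\begin{equation*}
\frac{1}{t}\hat U\bigl(\bone/(\bb(t)\bx)\bigr) = \int_{\Rplus^p} e^{-\sum_{i=1}^p v_i/x_i}\, U_t(d\bv)\,.
\end{equation*}
Fix $\bx>\bzero$ and, for $y>0$, introduce a continuous cutoff $\phi_y:\Rplus^p\to[0,1]$ which equals $1$ on $[\bzero,y\bone]$ and vanishes off $[\bzero,(y+1)\bone]$. The function $\bv\mapsto e^{-\sum_i v_i/x_i}\phi_y(\bv)$ is continuous with compact support in $\Rplus^p$, so the vague convergence $U_t\stackrel{v}{\to} U_\infty$ from \eqref{eq:b} yields
\begin{equation*}
\int e^{-\sum_i v_i/x_i}\phi_y(\bv)\,U_t(d\bv) \;\longrightarrow\; \int e^{-\sum_i v_i/x_i}\phi_y(\bv)\,U_\infty(d\bv)
\end{equation*}
as $t\to\infty$, for each fixed $y$.

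It then remains to let $y\to\infty$. On the $U_\infty$ side, monotone convergence together with the finiteness bound \eqref{eq:finite} gives convergence to $\hat U_\infty(\bone/\bx)$. On the $U_t$ side, the discrepancy between $\int e^{-\sum_i v_i/x_i}\phi_y\,dU_t$ and $\int e^{-\sum_i v_i/x_i}\,dU_t$ is bounded by $\int_{\cup_i\{v_i>y\}} e^{-\sum_i v_i/x_i}\,U_t(d\bv)$, which by hypothesis \eqref{eq:7early} tends to $0$ as $y\to\infty$ uniformly in large $t$. A standard $\epsilon/3$ argument combining the three convergences (vague convergence at fixed $y$, monotone convergence in $y$ on the limit side, and the uniform tail estimate \eqref{eq:7early}) then delivers \eqref{e:conv.tr}. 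The main obstacle is precisely the tail control, and it is exactly the role of \eqref{eq:7early} to circumvent it; everything else is bookkeeping with vague convergence and a continuous cutoff.
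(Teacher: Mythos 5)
Your proof is correct, and the overall skeleton matches the paper's: truncate the integral at level $y$, pass to the limit in $t$ for the truncated part, use \eqref{eq:7early} to control the tail uniformly in large $t$, and use monotone convergence on the limit side. Where you genuinely diverge is in how the truncated convergence is obtained. The paper manufactures a product measure $P[\mathcal{F}\in\cdot\,]\times U_t$ with i.i.d.\ standard Fr\'echet coordinates $\mathcal{F}=(1/E_1,\dots,1/E_p)$, pushes it forward under $h(\bx,\by)=(\bx\by,\by)$ via the mapping theorem (Proposition 5.5 of \cite{resnick:2007}), and evaluates the resulting measures on rectangles $[\bzero,\bx]\times[\bzero,y\bone]$; the exponential kernel $e^{-\sum_i v_i/x_i}$ emerges as $\prod_i P[1/E_i\le x_i/v_i]$. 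You instead integrate the continuous kernel directly against $U_t$ with a continuous compactly supported cutoff $\phi_y$, which is more elementary: it needs no external mapping theorem, and it sidesteps the paper's caveat about choosing $\bx$ and $y$ to be continuity points of the limit measure (a point the paper must, and does, later dispose of via the scaling property). Your verification that $\hat U(\bone/\bx)$ is a distribution function is also more direct --- factoring the rectangular increment as $\int\prod_i\bigl(e^{-u_i/b_i}-e^{-u_i/a_i}\bigr)U(d\bu)\ge 0$ --- whereas the paper exhibits it as a limit of distribution functions of the pushed-forward Radon measures. What the paper's construction buys in exchange is a reusable device: the same product-measure identity $\hat U_t(\bone/\bx)=\bigl(P[\mathcal{F}\in\cdot\,]\times U_t\bigr)\circ h^{-1}\bigl([\bzero,\bx]\times\Rplus^p\bigr)$ is exploited again in Section \ref{subsub:TransformImpliesMeasure} to prove tightness of $\{U_t\}$ for the converse direction, so the Fr\'echet trick is not idle. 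One small bookkeeping point in your write-up: either take $\phi_y$ nondecreasing in $y$ so that monotone convergence applies literally on the $U_\infty$ side, or sandwich $\int e^{-\sum_i v_i/x_i}\phi_y\,dU_\infty$ between the integrals over $[\bzero,y\bone]$ and $[\bzero,(y+1)\bone]$ and invoke \eqref{eq:finite}; as stated the family $\{\phi_y\}$ is not asserted to be monotone.
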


\begin{proof}
Let $E_1,\dots,E_p$ be iid standard exponentially
distributed random variables so that
$$\mathcal{F}=\Bigl(
\frac{1}{E_1},\dots,\frac{1}{E_p} 
\Bigr)
$$
are iid standard Frech\'et random variables with marginal distribution
$$P[1/E_1 \leq x]=e^{-1/x},\quad x>0.$$
From \eqref{eq:b} we get
\begin{equation}\label{eq:5}
P[\mathcal{F} \in \cdot\,]\times U_t \stackrel{v}{\to} P[\mathcal{F}
\in \cdot \,] \times U_\infty,
\end{equation}
in $M_+\bigl([0,\infty]^p \times \mathbb{R}_+^p\bigr). $  Define $h:[0,\infty]^p
\times \mathbb{R}_+^p \mapsto [0,\infty]^p \times \mathbb{R}_+^p$
by 
$$ h(\bx,\by) =(\bx \by,\by),$$
where $\bx \by=(x_iy_i,i=1,\dots,p)$ is componentwise multiplication,
and we set $0\cdot\infty=0$. The map $h$ satisfies the 
compactness condition of \cite[Proposition 5.5]{resnick:2007}: Suppose 
$A\subset [0,\infty]^p\times \mathbb{R}_+^p$ satisfies the condition that there exists
$M>0$ such that 
$$(\bx,\by) \in A \quad \text{ implies } \bigvee_{i=1}^p y_i \leq M.$$
Then
$$h^{-1}(A)=\{(\bu,\bv): (\bu\bv,\bv) \in A\}$$
satisfies 
$$(\bx,\by) \in h^{-1}(A)  \quad \text{ implies } \bigvee_{i=1}^p y_i
\leq M.$$ Thus if $A$ is relatively compact, so is $h^{-1}(A)$.
Therefore \eqref{eq:5} and \cite[Proposition 5.5]{resnick:2007} imply
\begin{equation}\label{eq:6}
\bigl( P[\mathcal{F} \in \cdot\,]\times U_t \bigr)\circ h^{-1}
\stackrel{v}{\to} \bigl( P[\mathcal{F}
\in \cdot \,] \times U_\infty \bigr)\circ h^{-1}, \text{ in
}M_+([0,\infty]^p\times \Rplus^p).
\end{equation}
Evaluate the left side of \eqref{eq:6} on the relatively compact
 set $[\bzero, \bx]\times [\bzero,y\bone ] $
(assuming $\bx>\bzero$ and $y>0 $ are chosen to make this is a continuity set of the
limit measure) and we get,
\begin{align}
\bigl( P[\mathcal{F} \in \cdot\,]&\times U_t \bigr)\circ h^{-1} \bigl( [\bzero, \bx]\times
[\bzero,y\bone ] \bigr)
=\iint_{\{ (\bu,\bv): \bu\bv \leq \bx, \bv \leq y\bone \}}
P[\mathcal{F} \in d\bu\,] U_t(d\bv)  \nonumber  \\
=&\int_{\bv\leq y\bone } \int_{\bu \leq \bx/\bv } P[\mathcal{F} \in
d\bu\,] U_t(d\bv) 
=\int_{\bv\leq y\bone } \prod_{i=1}^p e^{-v_i/x_i} U_t(d\bv)\nonumber\\
=&\int_{\bv\leq y\bone }  e^{-\sum_{i=1}^p v_i/x_i} U_t(d\bv) \label{eq:hatU}
\\
\intertext{and applying \eqref{eq:6} we conclude that as
  $t\to\infty$ this converges to}
\to & \int_{\bv\leq y\bone }  e^{-\sum_{i=1}^p v_i/x_i}
U_\infty(d\bv).\label{eq:notthereyet} \\
\intertext{
Now let $y\to\infty$ and apply monotone convergence to get
the integral in \eqref{eq:notthereyet} 
 to converge to  }
\to & \int_{\mathbb{R}_+^p}  e^{-\sum_{i=1}^p v_i/x_i} U_\infty(d\bv)=:
\hat U_\infty (\bone/\bx).\nonumber 
\end{align}
So to show for $\bx>\bzero$ that 
\begin{equation}\label{eq:taub}
\hat U_t(\bone/\bx) :=\int_{\mathbb{R}_+^p}  e^{-\sum_{i=1}^p v_i/x_i}
U_t(d\bv) =\frac 1t \hat U(\bone/(\bb(t)\bx)) \to \hat U_\infty
(\bone/\bx),
\end{equation}
we must verify that 
\begin{align*}
\lim_{y \to \binfty} \limsup_{t\to\infty}
\Bigl|
\int_{\bv\leq y\bone }  &e^{-\sum_{i=1}^p v_i/x_i} U_t(d\bv)-
\int_{\mathbb{R}_+^p}  e^{-\sum_{i=1}^p v_i/x_i} U_t(d\bv)  
\Bigr| =0,
\end{align*}
which is \eqref{eq:7early}.

The statement that $\hat U(\bone/\bx)$ is a distribution function of a
Radon measure follows from \eqref{eq:aa} since, as in \eqref{eq:hatU}, 
$$\infty>\hat U(\bone/\bx)=\lim_{y\to\infty}
\bigl( P[\mathcal{F} \in \cdot\,]\times U \bigr) \circ h^{-1}\bigl(  [\bzero, \bx]\times
[\bzero,y\bone ]\bigr),$$
a limit of the distribution functions of a
Radon measures. The statement about $\hat
U_\infty(\bone/\bx)$ follows similarly using the fact that $\hat
U_\infty(\blambda)<\infty$ for $\blambda >0$ by \eqref{eq:finite}.
\end{proof}

Rather than checking condition \eqref{eq:7early} directly, it may sometimes
be easier to
verify the following sufficient condition: for every $1\leq i\leq p$, suppose
\begin{align}
&U_i(x) =U(\mathbb{R}_+\times \dots \times [0,x]\times \mathbb{R}_+
\times \dots \times \mathbb{R}_+)<\infty,\label{eq:7a}\\
\intertext{and}
&\lim_{t\to\infty} \frac{U_i(b_i(t)x)}{t} =x^{\gamma_i},\quad
x>0.\label{eq:7b}
\end{align}
To see why these conditions are sufficient for \eqref{eq:7early}, 
dominate the integral in \eqref{eq:7early} by 
$$
\sum_{i=1}^p\int_{[v_i>y]}  e^{-\sum_{i=1}^p v_i/x_i} U_t(d\bv) 
$$ and focus, for simplicity, on the integral with $i=1$ which can be 
written as
\begin{align*}
\int_{[v_1>y]} &\Bigl[ \prod_{l=1}^p \int_{s_l>v_l} \frac{1}{x_l} e^{-s_l/x_l}
ds_l \Bigr] U_t(d\bv) \\
=&\int_{s_1>y} \Bigl( \int_{\substack{y<v_1\leq s_1\\s_l\geq v_l;
    l=2,\dots,p}}
U_t(d\bv) \Bigr)   \prod_{l=1}^p \frac{1}{x_l} e^{-s_l/x_l}ds_1\dots
ds_p\\
=& \int_{\bs \in (y,\infty)\times \mathbb{R}_+^{p-1} }
U_t \bigl( (y,s_1]\times [0,s_2]\times \dots \times[0,s_p]
\bigr)  \prod_{l=1}^p \frac{1}{x_l} e^{-s_l/x_l}ds_1\dots
ds_p\\
\leq & \int_{\bs \in (y,\infty)\times \mathbb{R}_+^{p-1} }
U_t (\bs)
  \prod_{l=1}^p \frac{1}{x_l} e^{-s_l/x_l}ds_1\dots
ds_p\\
\leq & \int_{y}^\infty U_t([0,s_1]\times \mathbb{R}_+^{p-1} ) 
 \frac{1}{x_1} e^{-s_1/x_1}ds_1\\
=&\int_y^\infty \frac{U_1(b_1(t)s_1)}{t}  \frac{1}{x_1}
e^{-s_1/x_1}ds_1\\
\intertext{and by an application of the Potter bounds, for given
  $\delta>0$ and large  enough $t$ and $y>1$, the previous expression
  is bounded by}
\leq & \int_y^\infty (const)s^{\gamma_1+\delta}  \frac{1}{x_1}
e^{-s_1/x_1}ds_1 \to 0, \quad (y\to\infty).
\end{align*}

\subsection{Regular variation of the transform
implies
  regular variation of the
measure.}\label{subsub:TransformImpliesMeasure}
In this section we assume \eqref{eq:aa}, \eqref{eq:bRegVar},
\eqref{eq:7early}
and
additionally assume there exists a finite-valued function 
$\hat U_\infty$  such that for $\bx>\bzero$,
\begin{equation}\label{eq:8}
\frac 1t \hat U (\bone/(\bb (t) \bx))=\bigl( P[\mathcal{F} \in \cdot
\,]\times U_t \bigr)\circ h^{-1} \bigl([\bzero,\bx]\times
\mathbb{R}_+^p\bigr)
\to \hat U_\infty (\bone/\bx).
\end{equation} 

We claim that $\{U_t\}$ is a tight family of measures on
$\mathbb{R}^p_+$.  It suffices to show that for any $M>0$
$$\sup_{t\geq 1} U_t[\bzero,M\bone]<\infty.$$
Given $\epsilon>0$, there exists $\delta>0$ such that $P\bigl[\mathcal{F}
\in [\delta \bone, \delta^{-1}\bone]\bigr]\geq 1-\epsilon$. 
For $\bx>\bzero$, 
\begin{align*}
\hat U_t(\bone/\bx)=& 
\bigl( P[\mathcal{F} \in \cdot \,]\times U_t\bigr) \circ h^{-1} ([\bzero,
\bx]\times \mathbb{R}_+^p)\\
=& \int_{\{(\bu,\bv):\bu\bv\leq \bx \}} P[\mathcal{F} \in d\bu\,] U_t  (d\bv)
\geq  \int_{\substack{\bu\bv \leq \bx\\ \bu \in [\delta \bone,
    \delta^{-1}\bone]}}  P[\mathcal{F} \in d\bu\,] U_t  (d\bv)\\
=& \int_{ \bu \in [\delta \bone,  \delta^{-1}\bone]}  U_t  (
\bx/\bu)P[\mathcal{F} \in d\bu\,] \\
\geq & U_t(\bx/\delta^{-1}) P\bigl[\mathcal{F} \in  [\delta \bone,
\delta^{-1}\bone] \bigr] \geq  U_t(\bx/\delta^{-1}) (1-\epsilon).
\end{align*}
Set $\bx=\delta^{-1} M \bone$ and then 
$$\sup_{t\geq 1} U_t(M\bone) \leq \frac{1}{1-\epsilon}\sup_{t\geq 1} \hat
U_t(1/(\delta^{-1} M \bone))<\infty$$
by convergence in \eqref{eq:8}.

Suppose $\{U_{t_n}\}$ is a convergent subsequence, say $U_{t_n} \to L$
in $M_+(\mathbb{R}_+^p)$. Since we assume \eqref{eq:7early} holds, the
mechanics of Section  \ref{subsubsec:measImpliesTransform}
give for $\bx>\bzero$,
\begin{equation}\label{eq:subseqLimit}
 \hat U_{t_n} (\bone/\bx) \to \hat L(\bone/\bx)<\infty,\quad
(t_n\to\infty)\end{equation}
at continuity points of the limit. From \eqref{eq:8} we get
$\hat L=\hat U_\infty$. If there are two subsequential limits
$L_1,L_2$ of $\{U_t\}$ then $\hat L_1 =\hat L_2=\hat U_\infty$ and so
$\{U_t\}$ converges in $M_+(\mathbb{R}_+^p) $ to some $U_\infty$ with
transform $\hat U_\infty$.

We summarize:

\begin{prop}\label{prop:2} Suppose $U \in M_+(\mathbb{R}_+^p)$   and
  let   \eqref{eq:aa},  
\eqref{eq:bRegVar}, \eqref{eq:7early} hold. If there exists a finite-valued function 
$\hat U_\infty$ such that \eqref{eq:8} holds,  then \eqref{eq:b}
holds for some measure $U_\infty \in M_+(\mathbb{R}_+^p)$ whose
Laplace transform is $\hat U_\infty$. Moreover, 
$$
U_t(\bx)=\frac 1t U(\bb (t) \bx) \to U_\infty (\bx),\quad
(t\to\infty)$$
for all $\bx$. 
\end{prop}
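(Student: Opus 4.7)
The plan is to follow the recipe sketched immediately before the statement: establish tightness of the family $\{U_t\}_{t\geq 1}$ in $M_+(\mathbb{R}_+^p)$, extract vague subsequential limits, identify all of them as a single measure $U_\infty$ via uniqueness of Laplace transforms, and then invoke the scaling property \eqref{e:scling.gen} to upgrade vague convergence to pointwise convergence at every $\bx$.

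\textbf{Tightness.} For each $M>0$ I would bound $U_t([\bzero,M\bone])$ uniformly in $t\geq 1$ by the Fr\'echet trick. Given $\epsilon>0$, pick $\delta>0$ with $P[\mathcal{F}\in[\delta\bone,\delta^{-1}\bone]]\geq 1-\epsilon$. The chain of inequalities already displayed in Section~\ref{subsub:TransformImpliesMeasure} gives $\hat U_t(\bone/\bx)\geq(1-\epsilon)U_t(\delta\bx)$, so taking $\bx=\delta^{-1}M\bone$ and using the hypothesis \eqref{eq:8}, which makes $\hat U_t(\bone/\bx)$ bounded in $t$, yields $\sup_t U_t([\bzero,M\bone])<\infty$. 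Hence $\{U_t\}$ is vaguely relatively compact in $M_+(\mathbb{R}_+^p)$.

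\textbf{Identification of subsequential limits.} Along any $t_n\to\infty$ there is a further subsequence (still denoted $t_n$) with $U_{t_n}\vrightarrow L\in M_+(\mathbb{R}_+^p)$. Because \eqref{eq:7early} is assumed, the argument of Section~\ref{subsub:measureImpliesTransform} applies verbatim to this subsequence: the truncated integral $\int_{\bv\leq y\bone}e^{-\sum_i v_i/x_i}U_{t_n}(d\bv)$ converges to the analogous integral against $L$ on relatively compact continuity sets, and the tail beyond $y\bone$ is controlled uniformly in $n$ by \eqref{eq:7early}. Hence $\hat U_{t_n}(\bone/\bx)\to\hat L(\bone/\bx)$ for all $\bx>\bzero$. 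Combined with \eqref{eq:8}, this forces $\hat L(\bone/\bx)=\hat U_\infty(\bone/\bx)$, and uniqueness of Laplace transforms on $\mathbb{R}_+^p$ then determines $L$ uniquely, independent of the subsequence. Call this common limit $U_\infty$.

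\textbf{Full convergence and pointwise statement.} Since every subsequential vague limit equals $U_\infty$, the entire family satisfies $U_t\vrightarrow U_\infty$, which is \eqref{eq:b}, and by construction $U_\infty$ has Laplace transform $\hat U_\infty$. With \eqref{eq:b} in hand, the derivation in Section~\ref{subsubsec:conseq} applies and yields the scaling relation \eqref{e:scling.gen}, which in turn implies that every $\bx\in(\bzero,\binfty)$ is a continuity point of $U_\infty(\cdot)$. Vague convergence of the distribution functions at continuity points therefore upgrades to $U_t(\bx)\to U_\infty(\bx)$ at every $\bx$, completing the proof. The main obstacle is really the identification step: passing from vague convergence of $U_{t_n}$ to convergence of the Laplace integrals requires controlling the tail of $U_{t_n}$ uniformly in $n$, which is precisely where condition \eqref{eq:7early} is indispensable — without it, mass could escape to infinity and the candidate limit could fail to be characterized by $\hat U_\infty$.
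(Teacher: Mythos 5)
Your proposal is correct and follows essentially the same route as the paper: tightness of $\{U_t\}$ via the Fr\'echet lower bound on $\hat U_t$, identification of all vague subsequential limits through the transform mechanics of Section~\ref{subsubsec:measImpliesTransform} together with \eqref{eq:7early} and uniqueness of Laplace transforms, and the scaling property to make every point a continuity point so that the pointwise statement follows. No gaps.
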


\section{Application to preferential attachment network models.} \label{sec:genfuncion} 
\subsection{Model description.}\label{subsec:moddes}
The directed edge  preferential attachement model 
studied by \cite{krapivsky:redner:2001}
and \cite{bollobas:borgs:chayes:riordan:2003} 
is a model for a growing directed random graph. The dynamics of the
model are as follows. Choose nonnegative real parameters
$\alpha, 
\beta, \gamma$, $\delta_{\text in}$ and $\delta_{\text out}$, such
that $\alpha+\beta+\gamma=1$. To avoid degenerate situations
assume  each of the numbers $\alpha, 
\beta, \gamma$ is strictly smaller than 1. 

At each step of the growth algorithm we  add  one edge to an
existing graph to obtain a new graph, and we will enumerate the
obtained graphs by the number of edges they contain. 
Start with an  initial finite directed
graph, denoted $G(n_0)$,  with at least one node and $n_0$ edges.
 For $n =n_0+1,n_0+2,\ldots$, 
$G(n)$ will be a graph with $n$ edges and a random number $N(n)$ of
nodes. If $u$ is a node in $G(n-1)$, $D_{\rm in}(u)$ and $D_{\rm out}(u)$ 
denote the in  and out degree of $u$ respectively. The graph $G(n)$ is
obtained from the graph $G(n-1)$ as follows.
\begin{itemize}
\item 
With probability
$\alpha$  we append to $G(n-1)$ a new node $v$ and an edge leading
from $v$ to an existing node $w$ in $G(n-1)$ (denoted $v \mapsto w$).
The existing node $w$ in $G(n-1)$ is chosen with probability depending
on its in-degree:
$$
p(\text{$w$ is chosen}) = \frac{D_{\rm in}(w)+\delta_{\text
    in}}{n-1+\delta_{\text in}N(n-1)} \,.
$$
\item 
With probability $\beta$ we only append to $G(n-1)$ a directed edge
$v\mapsto w$
between two existing nodes $v$ and $w$ of $G(n-1)$.
 The existing nodes $v,w$ are chosen independently from the nodes of $G(n-1)$ with
 probabilities 
$$
p(\text{$v$ is chosen}) = \frac{D_{\rm out}(v)+\delta_{\text
    out}}{n-1+\delta_{\text out}N(n-1)}, \ \ 
p(\text{$w$ is chosen}) = \frac{D_{\rm in}(w)+\delta_{\text
    in}}{n-1+\delta_{\text in}N(n-1)}\,.
$$
\item With probability
$\gamma$  we append to $G(n-1)$ a new node $w$ and an edge $v\mapsto
w$ leading
from  the existing node $v$ in $G(n-1)$  to the new node $w$. The
existing node $v$ in $G(n-1)$ is chosen with probability
$$
p(\text{$v$ is chosen}) = \frac{D_{\rm out}(v)+\delta_{\text
    out}}{n-1+\delta_{\text out}N(n-1)}\,.
$$
\end{itemize}
If either
$\delta_{\text in}=0$, or $\delta_{\text out}=0$, we must have
$n_0\geq 1$ for the initial steps of the algorithm to make
sense. 

For $i,j=0,1,2,\ldots$ and $n\geq n_0$, let $N_{ij}(n)$ be the
(random) number of nodes in $G(n)$ with in-degree $i$ and out-degree
$j$. Theorem 3.2 in \cite{bollobas:borgs:chayes:riordan:2003}  shows
that there are nonrandom constants $(f_{ij})$ such that
\begin{equation} \label{e:limit.f}
\lim_{n\to\infty} \frac{N_{ij}(n)}{n}=f_{ij} \ \ \text{a.s. for
  $i,j=0,1,2,\ldots$.}
\end{equation}
Clearly, $f_{00}=0$. Since we obviously have 
$$
\lim_{n\to\infty} \frac{N(n)}{n}=1-\beta \ \ \text{a.s.,}
$$
we see that the empirical joint in- and out-degree distribution in the
sequence $\{G(n)\}$ of growing random graphs has as a nonrandom limit
the probability distribution 
\begin{equation} \label{e:limit.p}
\lim_{n\to\infty}
\frac{N_{ij}(n)}{N(n)}=\frac{f_{ij}}{1-\beta}:=p_{ij} \ \
\text{a.s. for   $i,j=0,1,2,\ldots$.}
\end{equation}
In \cite{bollobas:borgs:chayes:riordan:2003} it was shown that the
limiting degree distribution $(p_{ij})$ has, marginally, regularly
varying (in fact, power-like) tails. Specifically, Theorem 3.1 {\it
  ibid.} shows that for some finite positive constants $C_{\text in}$
and $C_{\text out}$ we have
\begin{equation} \label{e:marginal.regvar}
p_i(\text{in}):= \sum_{j=0}^\infty p_{ij} \sim C_{\text
  in}i^{-\alpha_{\text in}} \ \ \text{as $i\to\infty$, as long as
  $\alpha \delta_{\text in}+\gamma>0$,}  
\end{equation} 
$$
p_j(\text{out}):= \sum_{i=0}^\infty p_{ij} \sim C_{\text
  out}j^{-\alpha_{\text out}} \ \ \text{as $j\to\infty$, as long as
  $\gamma \delta_{\text out}+\alpha>0$.}
$$
Here
\begin{equation} \label{e:exponents}
\alphain = 1+ \frac{1+\deltain(\alpha+\gamma)}{\alpha+\beta}, \ \ 
\alphaout = 1+ \frac{1+\deltaout(\alpha+\gamma)}{\gamma+\beta}\,.
\end{equation}

In fact,  the limiting degree distribution $(p_{ij})$ in
\eqref{e:limit.p} generates a distribution that has jointly
nonstandard regularly
varying tails and the limit measure of regular variation has a density as shown in
\cite{resnick:samorodnitsky:towsley:davis:willis:wan:2014}. 

\subsection{Notation and results summary.}\label{subsec:results}
 We summarize results and
notation
for the
preferential attachment model from \cite{resnick:samorodnitsky:towsley:davis:willis:wan:2014}.  

\begin{align}
&c_1 = \frac{\alpha+\beta}{1+\deltain(\alpha+\gamma)}=\frac{1}{\alphain-1}, \quad
c_2 =
\frac{\beta+\gamma}{1+\deltaout(\alpha+\gamma)}, \label{eq:c1c2}
=\frac{1}{\alphaout-1}\\ 
&a=c_2/c_1.\label{eq:a}
\end{align}

We developed an explicit
formula for the joint generating function of in- and out-degree.
The joint generating function of $\{p_{ij}\}$ in   
\eqref{e:limit.p},  
\begin{equation} \label{e:phi}
\varphi(x,y) = \sum_{i=0}^\infty \sum_{j=0}^\infty x^i y^jp_{ij}, \
0\leq x,y\leq 1,
\end{equation}
satisfies a partial differential
equation that, when solved, yields 
\begin{equation} \label{e:split.gf}
\varphi(x,y) = \frac{\gamma}{\alpha+\gamma}  x \varphi_1(x,y) 
+ \frac{\alpha}{\alpha+\gamma}  y \varphi_2(x,y) \,,
\end{equation}
with
\begin{align} 
\varphi_1(x,y) =& c_1^{-1}\int_1^\infty 
z^{-(1+1/c_1)} \bigl( x+(1-x)z\bigr)^{-(\deltain+1)} \bigl(
y+(1-y)z^a\bigr)^{-\deltaout }\, dz\,, \label{e:phi1}\\
\varphi_2(x,y) =& c_1^{-1}\int_1^\infty 
z^{-(1+1/c_1)} \bigl( x+(1-x)z\bigr)^{-\deltain} \bigl(
y+(1-y)z^a\bigr)^{-(\deltaout +1)}\, dz \label{e:phi2}
\end{align}
for $0\leq x,y\leq 1$. Each of $\varphi_1,\varphi_2$ is
the joint generating function of a pair of nonnegative integer-valued
random variables;
that is,
on some probability space we can find nonnegative integer-valued
random variables $X_j,\, Y_j, \ j=1,2$ such that
$$
\varphi_j(x,y) = E\bigl( x^{X_j}y^{Y_j}\bigr), \ 0\leq x,y\leq 1, \
j=1,2\,.
$$
If $(I,O)$ is a random vector with generating function
\eqref{e:split.gf}, 
$$\varphi(x,y)=E\bigl( x^Iy^O\bigr),$$
we can represent the distribution of $(I,O)$ as
\begin{equation}\label{eq:io.rep}
(I,0)\stackrel{d}{=} B(1+X_1,Y_1)+(1-B) (X_2,1+Y_2),
\end{equation}
where $B$ is a Bernoulli switching variable independent of $X_j,Y_j,\,
j=1,2$ with
$$P[B=1]=1-P[B=0]=\frac{\gamma}{\alpha+\gamma}.$$

The  explicit structure and form in \eqref{e:split.gf},\eqref{e:phi1}
and \eqref{e:phi2} allowed analysis of the asymptotic multivariate
power law structure performed in
\cite{resnick:samorodnitsky:towsley:davis:willis:wan:2014}.  Absent such 
structure, if all one has is the joint generating function, one would have to rely on Tauberian analysis of the
transform. We show how the material in Section
\ref{sec:mult.reg.var} is applicable.

\subsection{Joint regular variation of the distribution of in-degree
and out-degree} \label{sec:InOutRegVar} 
In this section we apply  the Tauberian theorem of Section
\ref{sec:Tauberian.thm} to the joint generating
function $\varphi$ of the limiting distribution of  in- and 
out-degree given in 
 \eqref{e:split.gf},\eqref{e:phi1}
and \eqref{e:phi2} to prove the
nonstandard joint regular variation of  in- and 
out-degree and obtain an expression for the density of the tail
measure.

The next Theorem \ref{t:two.measures} shows that each  of the random
vectors $\bigl( X_j,\, Y_j\bigr)$, $j=1,2$, has a nonstandard
regularly varying distribution. The 
decomposition \eqref{e:split.gf} allows us to deduce the
nonstandard joint regular variation of $(I,O)$, the  in-degree and  out-degree. 

\begin{theorem} \label{t:two.measures} 
Assume that $\deltain>0$ and $\deltaout>0$, and 
let $\alphain$ and $\alphaout$ be given by
\eqref{e:exponents}. For each $j=1,2$ there is a Radon
measure $V_j \in M_+([0,\infty]^2\setminus \{ {\mathbf 0}\})$ such
that as $h\to\infty$,  
\begin{equation} \label{e:regvar.j}
hP\Bigl[ \bigl( {h^{-1/(\alphain-1)}}{X_j}, \,
h^{-1/(\alphaout-1)}Y_j\bigr)\in\cdot\,\Bigr]\vrightarrow V_j (\cdot),
\end{equation}
vaguely in $M_+([0,\infty]^2\setminus \{ {\mathbf 0}\})$. 
Furthermore, $V_1$  and $V_2$ concentrate on $(0,\infty)^2$ and
have Lebesgue densities $f_1,f_2$ 
 given by,
\begin{align} 
f_1(x,y) =& c_1^{-1} \bigl(
\Gamma(\deltain+1)\Gamma(\deltaout)\bigr)^{-1}
x^{\deltain}y^{\deltaout-1}
\int_0^\infty z^{-(2+1/c_1+\deltain +a\deltaout)} e^{-(x/z+y/z^a)}\,
dz ,\label{e:density} \\
\intertext{and}
f_2(x,y) =& c_1^{-1} \bigl(
\Gamma(\deltain)\Gamma(\deltaout+1)\bigr)^{-1}
x^{\deltain-1}y^{\deltaout}
\int_0^\infty z^{-(1+a+1/c_1+\deltain +a\deltaout)} e^{-(x/z+y/z^a)}\, dz\,.
\label{e:density.2}
\end{align}
The  random vector $\bigl( I,O)$ with joint mass function
 $\{p_{ij}\}$ in \eqref{e:limit.p} satisfies as $h\to\infty$,
$$
hP\Bigl[ \bigl( h^{-1/(\alphain-1)}I, \,
h^{-1/(\alphaout-1)}O\bigr)\in\cdot \,\Bigr] \vrightarrow 
\frac{\gamma}{\alpha+\gamma} V_1(\cdot)+ \frac{\alpha}{\alpha+\gamma}
V_2(\cdot), 
$$
 vaguely in $M_+([0,\infty]^2\setminus \{ {\mathbf 0}\})$. 
\end{theorem}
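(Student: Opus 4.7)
The overall approach is to identify $\varphi_j(e^{-s_1},e^{-s_2})=E[e^{-s_1X_j-s_2Y_j}]$ as the joint Laplace transform of $(X_j,Y_j)$ and to apply the transform-implies-measure direction of the Tauberian theory (Proposition~\ref{prop:2}) with scaling functions $b_1(t)=t^{c_1}$ and $b_2(t)=t^{c_2}$, so that $\gamma_1=1/c_1=\alphain-1$ and $\gamma_2=1/c_2=\alphaout-1$. The crucial algebraic identity is $ac_1=c_2$, i.e.\ $b_1(t)^a=b_2(t)$, which reconciles the two scales that appear inside the integral representations \eqref{e:phi1} and \eqref{e:phi2}.

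The heart of the proof is the asymptotic analysis of \eqref{e:phi1} under the substitutions $x=e^{-1/(b_1(t)x_1)}$, $y=e^{-1/(b_2(t)x_2)}$ combined with the change of integration variable $z=b_1(t)w$. The factor $z^{-(1+1/c_1)}\,dz$ contributes $t^{-1}w^{-(1+1/c_1)}\,dw$, producing the required $1/t$ factor. Using $1-e^{-1/(b_i(t)x_i)}\sim 1/(b_i(t)x_i)$ one gets $(1-x)z\to w/x_1$, and by virtue of $ac_1=c_2$ also $(1-y)z^a\to w^a/x_2$. The two remaining factors in the integrand thus converge pointwise to $(1+w/x_1)^{-(\deltain+1)}$ and $(1+w^a/x_2)^{-\deltaout}$, which are the Laplace transforms (evaluated at $1/x_1$ and $1/x_2$) of two independent Gamma distributions of shapes $\deltain+1$ and $\deltaout$ and scales $w$ and $w^a$, respectively. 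Probabilistically this reproduces the mixture representation of $(X_j,Y_j)$: conditional on a Pareto-like auxiliary variable $Z$, the components are independent negative binomials whose scaling limit is exactly this product of Gammas.

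The resulting pointwise limit, integrated against $c_1^{-1}w^{-(1+1/c_1)}\,dw$, is the Laplace transform of a measure on $\Rplus^2$. Writing out the conditional Gamma density and interchanging the order of integration, the combined exponent of $w$ becomes $-(1+1/c_1)-(\deltain+1)-a\deltaout=-(2+1/c_1+\deltain+a\deltaout)$, producing exactly \eqref{e:density}; the analogous computation starting from \eqref{e:phi2} yields \eqref{e:density.2}. To apply Proposition~\ref{prop:2} rigorously one must verify \eqref{eq:7early}; I would do this via the sufficient conditions \eqref{eq:7a}--\eqref{eq:7b}, whose required marginal power laws follow from \eqref{e:marginal.regvar} or, equivalently, from repeating the scaling argument for $\varphi_j(\cdot,1)$ and $\varphi_j(1,\cdot)$.

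Finally, the statement for $(I,O)$ follows at once from the mixture representation \eqref{eq:io.rep}: under the divergent scaling, the constant shifts by $1$ disappear, so the tail measure of $(I,O)$ is the weighted combination $\frac{\gamma}{\alpha+\gamma}V_1+\frac{\alpha}{\alpha+\gamma}V_2$. The main technical obstacle lies in justifying the limit-integral interchange at the substitution step, since the limiting integrand has an integrable singularity at $w=0$; a uniform Potter-type control is required, and this is precisely the uniform-integrability information encoded by \eqref{eq:7early}.
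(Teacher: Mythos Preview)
Your scaling heuristic is right, but the proposal has a genuine gap: Proposition~\ref{prop:2} cannot be applied directly to the law of $(X_1,Y_1)$. That proposition concerns regular variation in $M_+(\Rplus^p)$, i.e.\ $\tfrac1t U([\bzero,\bb(t)\bx])\to U_\infty(\bx)$ with $U_\infty\not\equiv 0$; for a \emph{probability} measure $U$ one has $U([\bzero,\bb(t)\bx])\leq 1$, so the limit is identically $0$. Your own calculation shows the same thing from the transform side: after the change of variable $z=b_1(t)w$ the candidate limit is
\[
c_1^{-1}\int_0^\infty w^{-(1+1/c_1)}\bigl(1+w/x_1\bigr)^{-(\deltain+1)}\bigl(1+w^a/x_2\bigr)^{-\deltaout}\,dw,
\]
and near $w=0$ the integrand behaves like $w^{-(1+1/c_1)}$, which is \emph{not} integrable since $1/c_1=\alphain-1>0$. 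So the singularity you flag is not ``integrable'', the limit is $+\infty$, and the interchange cannot be salvaged by \eqref{eq:7early}. Relatedly, the sufficient conditions \eqref{eq:7a}--\eqref{eq:7b} are vacuous here: for a probability measure $U_i(b_i(t)x)/t\to 0$, not $x^{\gamma_i}$.

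The paper's proof supplies exactly the missing idea. One first differentiates $\varphi_1$ $k$ times in $x$ with an integer $k>\alphain-1$, so that $\psi=\partial_x^k\varphi_1$ is the transform of the \emph{infinite} measure $U=\sum_{i,j}m^{(k)}_{ij}\epsilon_{(i,j)}$ with $m^{(k)}_{ij}=\prod_{d=1}^k(i+d)\,p^{(k)}_{ij}$. On the integral side this inserts a factor $(z-1)^k$, which after the scaling $z=b_1(h)w$ contributes $w^k$ and renders the limiting integrand $w^{k-1-1/c_1}(1+w\lambda_1)^{-(\deltain+k+1)}(1+w^a\lambda_2)^{-\deltaout}$ integrable at $0$; the scaling functions become $b_1(h)=h^{1/(k-\alphain+1)}$, $b_2(h)=h^{(\alphain-1)/((\alphaout-1)(k-\alphain+1))}$. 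Proposition~\ref{prop:2} (with \eqref{eq:7early} verified directly from the marginal power laws \eqref{e:marginal.regvar}) now gives $h^{-1}U(\bb(h)\,\cdot)\vrightarrow V_{1,k}$ with density $f_{1,k}$ as in \eqref{e:density.k}. Finally one undoes the differentiation via $\mu^{(k)}(dx,dy)=U(dx,dy)\big/\prod_{d=1}^k(x+d)\sim x^{-k}U(dx,dy)$, which converts the $M_+(\Rplus^2)$ statement for $U$ into the $M_+([0,\infty]^2\setminus\{\bzero\})$ tail statement \eqref{e:regvar.j} with density $f_1(x,y)=x^{-k}f_{1,k}(x,y)$; a truncation at level $M$ plus the marginal tails \eqref{e:marginal.regvar} controls the remainder. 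Your treatment of the $(I,O)$ mixture at the end is fine.
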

\begin{proof}
It is enough to prove \eqref{e:regvar.j}, \eqref{e:density} and
\eqref{e:density.2} .  
We treat the case $j=1$. The case $j=2$ is  analogous. 
Since $\varphi_1(x,y)$ is the generating function of a probability
mass function, simply converting $\varphi_1(x,y)$ into a Laplace
transform will not yield the Laplace transform of an infinite measure
$U$ as required by the previous section. So we first modify the generating function.

Choose and fix a positive integer $k>\alphain-1$. 
This choice of $k$ guarantees $E(X_1^k)=\infty$.
Denote
$$
\psi(x,y) = \frac{\partial^k \varphi_1}{\partial x^k}(x,y), \
0<x,y<1,
$$
so that  the function $\psi$  can be written in the form
\begin{equation}\label{e:defpsi}
\psi(x,y) = \sum_{i=0}^\infty \sum_{j=0}^\infty x^i y^j m^{(k)}_{ij}, 
\ 0< x,y< 1\,,
\end{equation}
where 
$$
m^{(k)}_{ij} = \prod_{d=1}^k (i+d) p^{(k)}_{ij}, \ i,j=0,1,2,\ldots\,,
$$
and $(p^{(k)}_{ij})$ is the joint probability mass function of the
random vector $(X_1-k,Y_1)$. Let 
$U(\cdot)=\sum_{i,j} m^{(k)}_{ij}\epsilon_{(i,j)} (\cdot)$ 
be the infinite Radon measure on $[0,\infty)^2$ concentrating on
$\bigl( \{0,1,2,\ldots\}\bigr)^2$ that puts mass $m^{(k)}_{ij}$
at  $(i,j)$. To verify this measure is infinite, observe
$$\sum_{i,j} m_{ij}^{(k)}=\sum_{l=0}^\infty \prod_{p=1}^k
  (p+l)P[X_1=l+k]$$
and since 
$\prod_{p=1}^k(p+l) \sim (l+k)^k$ as $l\to \infty$ 
and $E(X_1^k)=\infty$, we have $\sum_{i,j} m_{ij}^{(k)}$ diverges.

Using Proposition \ref{prop:2},  we  show  that the measure $U$
is regularly varying:  As $h\to\infty$, we show,
\begin{equation} \label{e:regvar.m}
\frac1h U\left\{ (i,j): \, \bigl( h^{-1/(k-\alphain+1)}i, \,
  h^{-(\alphain-1)/((\alphaout-1)(k-\alphain+1))}j\bigr)\in \cdot\,\right\} 
\vrightarrow V_{1,k} (\cdot)
\end{equation}
vaguely in $M_+([0,\infty)^2)$,
where the Radon measure $V_{1,k}$  concentrates on $(0,\infty)^2$
with 
density 
\begin{equation} \label{e:density.k}
f_{1,k}(x,y) = c_1^{-1} \bigl(
\Gamma(\deltain+1)\Gamma(\deltaout)\bigr)^{-1}
x^{\deltain+k}y^{\deltaout-1}
\int_0^\infty z^{-(2+1/c_1+\deltain +a\deltaout)} e^{-(x/z+y/z^a)}\, dz\,.
\end{equation}
To this end, using the form of $\phi_1$ in \eqref{e:phi1},  we 
 write the function $\psi$ in
\eqref{e:defpsi} 
explicitly as
\begin{align*}
\psi(x,y) =& c_1^{-1} \prod_{i=1}^k  (\deltain+i) 
\int_1^\infty
z^{-(1+1/c_1)} (z-1)^k \bigl( x+(1-x)z\bigr)^{-(\deltain+k+1)} \bigl(
y+(1-y)z^a\bigr)^{-\deltaout }\, dz
\\
:=& c_1^{-1} \prod_{i=1}^k  (\deltain+i) \tilde \psi(x,y)\,.
\end{align*}
We switch from generating functions to Laplace transforms by replacing
$(x,y)$ with $e^{-\blambda}=(e^{-\lambda_1},e^{-\lambda_2})$ and then
consider regular variation of the resulting Laplace transform.
For fixed $\lambda_1>0, \, \lambda_2>0$ elementary calculations show
that, as $h\to\infty$, 
\begin{align*}
h^{-1} \tilde \psi &\left( e^{-\lambda_1 h^{-1/(k-\alphain+1)}}, \, e^{-\lambda_2
  h^{-(\alphain-1)/(\alphaout-1)(k-\alphain+1)}} \right)\\
&\sim h^{-1} \int_1^\infty z^{k-1-1/c_1}\left( 1+ z \lambda_1
  h^{-1/(k-\alphain+1)}\right)^{-(\deltain+k+1)}  \\
&\qquad \qquad
\times \left( 1+ z^a \lambda_2
  h^{-(\alphain-1)/((\alphaout-1)(k-\alphain+1))}\right)^{-\deltaout }\, dz\\
&= \int_{h^{-1/(k-\alphain+1)}}^\infty z^{k-1-1/c_1}\left( 1+ z
  \lambda_1\right)^{-(\deltain+k+1)}  \left( 1+ z^a \lambda_2\right)^{-\deltaout }\, dz
\\
&\to 
\int_{0}^\infty z^{k-1-1/c_1}\left( 1+ z
  \lambda_1\right)^{-(\deltain+k+1)}  \left( 1+ z^a
  \lambda_2\right)^{-\deltaout }\, dz\,. 
\end{align*}
We conclude that for any $\lambda_1>0, \, \lambda_2>0$, as $h\to\infty$, 
\begin{align}
h^{-1} \hat U&(\lambda_1  h^{-1/(k-\alphain+1)} ,  \lambda_2      h^{-(\alphain-1)/((\alphaout-1)(k-\alphain+1))})
 \label{e:uhat}\\
&=h^{-1}  \psi \left( e^{-\lambda_1 h^{-1/(k-\alphain+1)}}, \, e^{-\lambda_2
  h^{-(\alphain-1)/((\alphaout-1)(k-\alphain+1))}}    \right)\nonumber\\
&\to c_1^{-1} \prod_{i=1}^k  (\deltain+i) \int_{0}^\infty z^{k-1-1/c_1}\left( 1+ z
  \lambda_1\right)^{-(\deltain+k+1)}  \left( 1+ z^a
  \lambda_2\right)^{-\deltaout }\, dz
\nonumber\\
&= \int_{[0,\infty)^2} e^{-(\lambda_1v_1+\lambda_2v_2)}\,
V_{1,k}(dv_1,dv_2)\,,\nonumber
\end{align}
where the measure $V_{1,k}$ concentrates on $(0,\infty)^2$
and has  density
$$ f_{1,k}(x,y)=
 c_1^{-1} \prod_{i=1}^k  (\deltain+i) 
\int_{0}^\infty z^{k-1-1/c_1} \frac{x^{\deltain+k}z
  ^{-(\deltain+k+1)}}{\Gamma(\deltain+k+1)} e^{-x/z}
\frac{y^{\deltaout-1}(z^a)^{-\deltaout}}{\Gamma(\deltaout)}e^{-y/z^a}
dz,$$
given by \eqref{e:density.k}. 

The claim \eqref{e:regvar.m} now follows from \eqref{e:uhat} and the
Tauberian result in Proposition \ref{prop:2} provided we check that the measure $U$
satisfies condition \eqref{eq:7early} of that result so we must check
with
$$\bb(h)=\bigl(h^{1/(k-\alphain+1)}, \,   h^{(\alphain-1)/((\alphaout-1)(k-\alphain+1))} \bigr)
$$that
\begin{equation}\label{e:pleaseBeTrue}
\lim_{y\to\infty}\limsup_{h\to\infty} \int_{[v_1>y]\cup [v_2>y]
}e^{-\blambda' \bv }\, h^{-1}U(\bb (h) d\bv )=0.
\end{equation}
Considering the definition of $U(\cdot)$ the integral in
\eqref{e:pleaseBeTrue} becomes, after a 
change of variable $s_i=b_i(h)v_i$,
\begin{align*}
\int_{[s_1>b_1(h)y]\cup [s_2>b_2(h) y]
}&e^{-(\lambda_1 s_1/b_1(h)+\lambda_2 s_2/b_2(h) )  }\, h^{-1}U(d\bs )
\\
=&\sum_{[i>b_1(h)y]\cup [j>b_2(h) y]}
e^{-(\lambda_1 i/b_1(h)+\lambda_2 j/b_2(h) )  }\,  h^{-1} m_{ij}^{(k)}\\
=&\sum_{[i>b_1(h)y]\cup [j>b_2(h) y]}
e^{-(\lambda_1 i/b_1(h)+\lambda_2 j/b_2(h) )  }\, h^{-1} 
\prod_{d=1}^k (i+d) p^{(k)}_{ij}\\
\leq & \sum_{i>b_1(h)y}   + \sum_{j>b_2(h)y}   .
\end{align*}
Notice that
\begin{align*}
& \sum_{i>b_1(h)y} e^{-(\lambda_1 i/b_1(h)+\lambda_2 j/b_2(h) )  }\, h^{-1} 
\prod_{d=1}^k (i+d) p^{(k)}_{ij}\\ 
\leq &\sum_{i>b_1(h)y}
e^{-(\lambda_1 i/b_1(h) )  }\, h^{-1} 
\sum_j\prod_{d=1}^k (i+d) p^{(k)}_{ij}\\
=&\sum_{i>b_1(h)y}
e^{-(\lambda_1 i/b_1(h) )  }\, h^{-1} 
\prod_{d=1}^k (i+d) p_{i+k}(in)\\
\intertext{using the notation from \eqref{e:marginal.regvar}. Set
  $u_i=
\prod_{d=1}^k (i+d) p_{i+k}(in)$  so from \eqref{e:marginal.regvar} $u_i
\sim  C_{\text{in}} i^{k-\alphain}$. Letting  $C$ be a finite
constant, the sum on the previous line is
bounded by   
}
& C \sum_{i>b_1(h)y}
e^{-(\lambda_1 i/b_1(h) )  }\, h^{-1} i^{k-\alphain} \\
\sim & C  h^{-1}\int_{b_1(h)y}^\infty e^{-(\lambda_1 x/b_1(h) )
}x^{k-\alphain} \, dx \\
\to & C  \int_{y}^\infty e^{-\lambda_1 x}x^{k-\alphain} \, dx 
,\qquad (h\to\infty),\\
\to & 0 \qquad (y\to\infty).
\end{align*}
In particular, given $\vep>0$, there is $\theta_\vep\in (0,\infty)$
such that for all $h$ large enough,
$$
\sum_{i>b_1(h)\theta_\vep} e^{-(\lambda_1 i/b_1(h)+\lambda_2 j/b_2(h)
  )  }\, h^{-1}  \prod_{d=1}^k (i+d) p^{(k)}_{ij}\leq \vep.
$$
For such $h$,
$$
 \sum_{j>b_2(h)y} \leq \vep + \sum_{i\leq b_1(h)\theta_\vep, \,
   j>b_2(h)y} .
$$
Further, for some positive constant $C$, 
\begin{align*}
& \sum_{i\leq b_1(h)\theta_\vep, \,   j>b_2(h)y} e^{-(\lambda_1
  i/b_1(h)+\lambda_2 j/b_2(h) )  }\, h^{-1}  
\prod_{d=1}^k (i+d) p^{(k)}_{ij}\\ 
\leq & C \sum_{i\leq b_1(h)\theta_\vep, \,   j>b_2(h)y} h^{-1}  
i^k p^{(k)}_{ij}\\ 
\leq & C \theta_\vep^k h^{-1}   b_1(h)^k  \sum_{j>b_2(h)y}
p^{(k)}_{ij}\\ 
= & C \theta_\vep^k h^{(\alphain-1)/(k-\alphain+1)} \sum_{j>b_2(h)y}
p_j(out) \\
\sim & \bigl( C C_{out}\theta_\vep^k/(\alphaout-1)\bigr)
h^{(\alphain-1)/(k-\alphain+1)} (b_2(h)y)^{-(\alphaout-1)} \\
= & \bigl( C C_{out}\theta_\vep^k/(\alphaout-1)\bigr)
y^{-(\alphaout-1)} \\
\to & 0 \qquad (y\to\infty)
\end{align*}
by the Karamata theorem, using the notation from
\eqref{e:marginal.regvar}.  

Letting $\vep\to 0$ we see that we have verified that the measure $U$
satisfies condition \eqref{eq:7early} and that \eqref{e:regvar.m}
holds 
and we are now ready to prove \eqref{e:regvar.j}. 
Let $\mu^{(k)}=\sum_{i,j} p_{ij}^{(k)}\epsilon_{(i,j)}$ be the
probability measure concentrating on
$\bigl( \{0,1,2,\ldots\}\bigr)^2$ that puts mass  $p^{(k)}_{ij}$
at $(i,j)$. For \eqref{e:regvar.j},
 it is enough
to prove that for any $a,b>0$,
\begin{equation} \label{e:m.to.nu}
h\int_{h^{1/(\alphain-1)}a}^\infty \, 
\int_{h^{1/(\alphaout-1)}b}^\infty 
\mu^{(k)}(dx,dy) \to 
\int_a^\infty\int_b^\infty f_1(x,y)\, dxdy
\end{equation}
as $h\to\infty$, with $f_1$ given by \eqref{e:density}.
 Indeed, by Theorem 3.2 in
\cite{bollobas:borgs:chayes:riordan:2003}, the conditional distributions
of the random vector $\bigl( I,O)$ are also regular varying with
exponents of regular variation strictly larger than those of the
marginal distributions. Therefore, one can trivially add the axes
$\{x=0, \, y>0\}$ and $\{ x>0, \, y=0\}$ to the convergence in
\eqref{e:m.to.nu} which yields
$$
hP\Bigl[ \bigl( h^{-1/(\alphain-1)}(X_1-k), \,
h^{-1/(\alphaout-1)}Y_1\bigr)\in\cdot \,\Bigr] \vrightarrow V_1(\cdot)\,,
$$
which is equivalent to \eqref{e:regvar.j} with $j=1$. 

It remains, therefore, to prove \eqref{e:m.to.nu}. Fix
$M>\max(a,b)$. 
Since
$$\mu^{(k)}(dx,dy)=\frac{U(dx,dy)}{\prod_{d=1}^k (x+d)},$$
we have, as $h\to\infty$,
\begin{align*}
h\int_{h^{1/(\alphain-1)}a}^{h^{1/(\alphain-1)}M} &
\, 
\int_{h^{1/(\alphaout-1)}b}^{h^{1/(\alphaout-1)}M}
\mu^{(k)}(dx,dy)
\sim h\int_{h^{1/(\alphain-1)}a}^{h^{1/(\alphain-1)}M}
\, 
\int_{h^{1/(\alphaout-1)}b}^{h^{1/(\alphaout-1)}M}
x^{-k}U(dx,dy)\\
=& h^{1-k/(\alphaout-1)}\int_a^M\int_b^M x^{-k} U\bigl(
dh^{1/(\alphain-1)}x, \, dh^{1/(\alphaout-1)}y\bigr)\,.\\
\intertext{Denoting $m_h =  h^{k/(\alphaout-1)-1}$, we can 
write the above as}
=&\frac{1}{m_h} \int_a^M\int_b^M x^{-k} U \bigl(
m_h^{1/(k-\alphain+1)}dx, \,
m_h^{(\alphain-1)/((\alphaout-1)(k-\alphain+1))} dy\bigr)
\\
\to & \int_a^M\int_b^M x^{-k}f_{1,k}(x,y)\, dx\, dy
\end{align*}
as $h\to\infty$ by  \eqref{e:regvar.m}. Since 
$$
f_1(x,y) = x^{-k}f_{1,k}(x,y), \ 0<x,y<1\,,
$$
the statement \eqref{e:regvar.j} with $j=1$ follows, because by
\eqref{e:marginal.regvar} and \eqref{e:split.gf}, 
\begin{align*}
\limsup_{h\to\infty} 
& \,h\left[ \int_{h^{1/(\alphain-1)}M}^\infty \int_0^\infty
  \mu^{(k)}(dx,dy)
+ \int_0^\infty\int_{h^{1/(\alphaout-1)}M} \mu^{(k)}(dx,dy)\right]\\
\leq & \limsup_{h\to\infty} hP\bigl( X_1>h^{1/(\alphain-1)}M +k\bigr) + 
\limsup_{h\to\infty}hP\bigl( Y_1>h^{1/(\alphaout-1)}M\bigr)\\
\leq &\frac{\alpha+\gamma}{\gamma} \frac{C_{\text in}}{\alphain-1}
M^{-(\alphain-1)} + \frac{\alpha+\gamma}{\alpha} \frac{C_{\text out}}{\alphaout-1}
M^{-(\alphaout-1)}\,,
\end{align*}
and one only needs to let $M\to\infty$. 

As mentioned before, the case of \eqref{e:regvar.j} with  $j=2$ is
analogous. 
\end{proof}

\section{Acknowledgment}
We appreciate several helpful and informative conversations with Don
Towsley and Bo Jiang of the University of Massachusetts.

\bibliographystyle{Genamystyle}
\bibliography{Genabibfile}

\begin{thebibliography}{12}
\expandafter\ifx\csname natexlab\endcsname\relax\def\natexlab#1{#1}\fi

\bibitem[Bingham et~al.(1987)Bingham, Goldie and
  Teugels]{bingham:goldie:teugels:1987}
{\sc N.~Bingham, C.~Goldie {\rm and} J.~Teugels} (1987): {\em Regular
  Variation\/}.
\newblock Cambridge University Press, Cambridge.

\bibitem[Bollob\'as et~al.(2003)Bollob\'as, Borgs, Chayes and
  Riordan]{bollobas:borgs:chayes:riordan:2003}
{\sc B.~Bollob\'as, C.~Borgs, J.~Chayes {\rm and} O.~Riordan} (2003): Directed
  scale-free graphs.
\newblock In {\em Proceedings of the Fourteenth Annual ACM-SIAM Symposium on
  Discrete Algorithms (Baltimore, 2003)\/}. ACM, New York, pp. 132--139.

\bibitem[Feller(1971)]{feller:1971}
{\sc W.~Feller} (1971): {\em An Introduction to Probability Theory and its
  Applications\/}, volume~2.
\newblock Wiley, New York, 2nd edition.

\bibitem[Krapivsky and Redner(2001)]{krapivsky:redner:2001}
{\sc P.~Krapivsky {\rm and} S.~Redner} (2001): Organization of growing random
  networks.
\newblock {\em Physical Review E\/} 63:066123:1--14.

\bibitem[Resnick(1991)]{resnick:1991}
{\sc S.~Resnick} (1991): Point processes and {T}auberian theory.
\newblock {\em Math. Sci.\/} 16:83--106.

\bibitem[Resnick(2007)]{resnick:2007}
{\sc S.~Resnick} (2007): {\em Heavy-Tail Phenomena: Probabilistic and
  Statistical Modeling\/}.
\newblock Springer, New York.

\bibitem[{Samorodnitsky} et~al.(2014){Samorodnitsky}, {Resnick}, {Towsley},
  {Davis}, {Willis} and
  {Wan}]{resnick:samorodnitsky:towsley:davis:willis:wan:2014}
{\sc G.~{Samorodnitsky}, S.~{Resnick}, D.~{Towsley}, R.~{Davis}, A.~{Willis}
  {\rm and} P.~{Wan}} (2014): {Nonstandard regular variation of in-degree and
  out-degree in the preferential attachment model}.
\newblock {\em ArXiv e-prints\/} \url{http://arxiv.org/pdf/1405.4882.pdf}.

\bibitem[Stadtm{\"u}ller(1981)]{stadtmuller:1981}
{\sc U.~Stadtm{\"u}ller} (1981): A refined {T}auberian theorem for {L}aplace
  transforms in dimension {$d>1$}.
\newblock {\em J. Reine Angew. Math.\/} 328:72--83.

\bibitem[Stadtm{\"u}ller and Trautner(1979)]{stadtmuller:trautner:1979}
{\sc U.~Stadtm{\"u}ller {\rm and} R.~Trautner} (1979): Tauberian theorems for
  {L}aplace transforms.
\newblock {\em J. Reine Angew. Math.\/} 311/312:283--290.

\bibitem[Stadtm{\"u}ller and Trautner(1981)]{stadtmuller:trautner:1981}
{\sc U.~Stadtm{\"u}ller {\rm and} R.~Trautner} (1981): Tauberian theorems for
  {L}aplace transforms in dimension {$D>1$}.
\newblock {\em J. Reine Angew. Math.\/} 323:127--138.

\bibitem[Stam(1977)]{stam:1977}
{\sc A.~Stam} (1977): Regular variation in $\mathbb{R}_+^d$ and the
  {A}bel-{T}auber theorem.
\newblock Technical Report, unpublished, Mathematisch Instituut,
  Rijksuniversiteit Groningen.

\bibitem[Yakimiv(2005)]{yakimiv:2005}
{\sc A.~Yakimiv} (2005): {\em Probabilistic Applications of Tauberian
  Theorems\/}.
\newblock Modern Probability and Statistics. VSP, Leiden, The Netherlands.

\end{thebibliography}

\end{document}